\newtheorem{theorem}{Theorem}
\newtheorem{lemma}[theorem]{Lemma}
\newtheorem{remark}[theorem]{Remark}
\newcommand{\E}{{\mathbb E}}
\DeclareMathOperator*{\esssup}{ess\,sup}
\newcommand{\N}{\mathbb{N}}
\newcommand{\R}{\mathbb{R}}
\title{Maximal Martingale Wasserstein Inequality}
\author{Benjamin Jourdain\thanks{CERMICS, Ecole des Ponts, INRIA, Marne-la-Vallée, France. E-mail: benjamin.jourdain@enpc.fr - This research benefited from the support of the \textquotedblleft Chaire Risques Financiers\textquotedblright , Fondation du Risque.}
\and 
Kexin Shao\thanks{
INRIA Paris,  2 rue Simone Iff, CS 42112, 75589 Paris Cedex 12, France, Universit\'e Paris-Dauphine, Ecole des Ponts ParisTech. E-mail: kexin.shao@inria.fr. This project has received funding from the European Union’s Horizon 2020 research and innovation
programme under the Marie Sk\l{}odowska-Curie grant agreement No 945322.
}}
\date{\today}
\begin{document}
\maketitle
\begin{abstract}
In this note, we complete the analysis of the Martingale Wasserstein Inequality started in \cite{JoMa22} by checking that this inequality fails in dimension $d\ge 2$ when the integrability parameter $\rho$ belongs to $[1,2)$ while a stronger Maximal Martingale Wasserstein Inequality holds whatever the dimension $d$ when $\rho\ge 2$.
\end{abstract}

\section{Introduction}
The present paper elaborates on the convergence to $0$ as $n\to\infty$ of $\inf_{M\in\Pi^{\mathrm{M}}(\mu_n,\nu_n)}\int_{\R^d\times\R^d}|y-x|^\rho M(dx,dy)$ with the Wasserstein distance $\mathcal W_\rho(\mu_n,\nu_n)$ when for each $n\in\N$, $\mu_n$ and $\nu_n$ belong to the set $\mathcal P_\rho(\R^d)$ of probability measures on $\R^d$ with a finite moment of order $\rho\in[1,+\infty)$ and the former is smaller than the latter in the convex order. The convex order between $\mu,\nu\in\mathcal P_1(\R^d)$ which is denoted $\mu\le_{cx}\nu$ amounts to
\begin{equation}\label{def:convexOrder}
\int_{\R^d}f(x)\,\mu(dx)\le\int_{\R^d}f(y)\,\nu(dy)\mbox{ for each convex function }f:\R^d\to\R,
\end{equation}
and, by Strassen's theorem \cite{St65}, is equivalent to the non emptyness of the set of martingale couplings between $\mu$ and $\nu$ defined by 
\begin{align*}
   &\Pi^{\mathrm{M}}(\mu,\nu)=\left\{M(dx,dy)=\mu(dx)m(x,dy)\in\Pi(\mu,\nu)\mid\mu(dx)\text{-a.e.},\ \int_{\R^d}y\,m(x,dy)=x\right\}\mbox{ where }\\
&\Pi(\mu,\nu)=\{\pi\in\mathcal P_1(\R^d \times \R^d)\mid\forall A\in\mathcal B(\R^d),\ \pi(A\times\R^d)=\mu(A)\ \mathrm{and}\ \pi(\R^d\times A)=\nu(A)\}.
\end{align*}
The Wasserstein distance with index $\rho$ is defined by
\[
\mathcal W_\rho(\mu,\nu)=\left(\inf_{\pi\in\Pi(\mu,\nu)}\int_{\R^d\times\R^d}\vert x-y\vert^\rho\,\pi(dx,dy)\right)^{1/\rho}
\]
and we also introduce $\underline{\mathcal M}_\rho(\mu,\nu)$ and $\overline{\mathcal M}_\rho(\mu,\nu)$ respectively defined by 
\begin{align}
  \underline{\mathcal M}^\rho_\rho(\mu,\nu)=\inf_{M\in\Pi^{\mathrm{M}}(\mu,\nu)}\int_{\R^{2d}}\vert x-y\vert^\rho M(dx,dy),\;\overline{\mathcal M}^\rho_\rho(\mu,\nu)=\sup_{M\in\Pi^{\mathrm{M}}(\mu,\nu)}\int_{\R^{2d}}\vert x-y\vert^\rho\,M(dx,dy).
\end{align}
In dimension $d=1$, the optimization problems defining $\underline{\mathcal M}_\rho$ and $\overline{\mathcal M}_\rho$ are the respective subjects of \cite{HoKl} and \cite{HobsonNeuberger} when $\rho=1$, while the general case $\rho\in (0,+\infty)$ is studied in \cite{JoSh23}.

The question of interest is related to the stability of Martingale Optimal Transport problems with respect to the marginal distributions $\mu$ and $\nu$ established in dimension $d = 1$ in \cite{BaPa22, Wi20} while it fails in higher dimension according to \cite{BruJu22}. A quantitative answer is given in dimension $d=1$ by the Martingale Wasserstein inequality established in \cite[Proposition 1]{JoMa22} for $\rho\in[1,+\infty)$,
\begin{align}
   \exists \underline{C}_{(\rho,\rho),1}<\infty,\;\forall \mu,\nu\in{\cal P}_\rho(\R)\mbox{ with }\mu\le_{cx}\nu,\;\underline{\mathcal M}^\rho_\rho(\mu,\nu)\le \underline{C}_{(\rho,\rho),1} {\cal W}_\rho(\mu,\nu)\sigma_\rho^{\rho -1}(\nu),\label{inmartwass}
\end{align}
where the central moment $\sigma_\rho(\nu)$ of $\nu$ is defined by
\[\sigma_\rho(\nu)=\inf_{c\in\R^d}\left(\int_{\R^d}\vert y-c\vert^\rho\,\nu(dy)\right)^{1/\rho}\mbox{ when }\rho\in[1,+\infty)\mbox{ and }\sigma_\infty(\nu)=\inf_{c\in\R^d}\nu-\esssup_{y\in\R^d}|y-c|.
\]
The proposition also states that ${\cal W}_\rho(\mu,\nu)$ and $\sigma_\rho(\nu)$ have the right exponent in this inequality in the sense that for $1<s<\rho$, $\sup_{\stackrel{\mu,\nu\in{\cal P}_\rho(\R)}{\mu\le_{cx}\nu,\mu\neq \nu}}\frac{\underline{\mathcal M}^\rho_\rho(\mu,\nu)}{{\cal W}^s_\rho(\mu,\nu)\sigma_\rho^{\rho -s}(\nu)}=+\infty$.
The generalization of \eqref{inmartwass} to higher dimensions $d$ is also investigated in \cite{JoMa22} where it is proved that for any $d\ge 2$, $$\underline{C}_{(\rho,\rho),d}:=\sup_{\stackrel{\mu,\nu\in{\cal P}_\rho(\R^d)}{\mu\le_{cx}\nu,\mu\neq \nu}}\frac{\underline{\mathcal M}^\rho_\rho(\mu,\nu)}{{\cal W}_\rho(\mu,\nu)\sigma_\rho^{\rho -1}(\nu)}$$
is infinite when $\rho\in [1,\frac{1+\sqrt{5}}2)$
, while the one-dimensional constant $\underline{C}_{(\rho,\rho),1}$ is preserved when $\mu$ and $\nu$ are products of one-dimensional probability measures or when, for $X$ distributed according to $\mu$, the conditional expectation of $X$ given the direction of $X-\E[X]$ is a.s. equal to $\E[X]$ and $\nu$ is the distribution of $X+\lambda(X-\E[X])$ for some $\lambda\ge 0$.
The present paper answers the question of the finiteness of $\underline{C}_{(\rho,\rho),d}$ when $\rho\in[\frac{1+\sqrt{5}}2,+\infty)$ and $d\ge 2$, which remained open. It turns out that 
$\underline{C}_{(\rho,\rho),d}=+\infty$ for $d\ge 2$ when $\rho\in[1,2)$ while for $\rho\in[2,+\infty)$ the inequality \eqref{inmartwass} generalizes in any dimension $d$ into a Maximal Martingale Wasserstein inequality with the left-hand side $\underline{\mathcal M}^\rho_\rho(\mu,\nu)$ replaced by the larger $\overline{\mathcal M}^\rho_\rho(\mu,\nu)$. We even replace conjugate exponents $\rho$ and $\frac{\rho}{\rho-1}$ leading to the respective indices $\rho=\rho\times 1$ and $\rho=\frac{\rho}{\rho-1}\times(\rho-1)$ in the factors ${\cal W}$ and $\sigma$ in \eqref{inmartwass} by general conjugate exponents $q\in[1,+\infty]$ and $\frac{q}{q-1}\in[1,+\infty]$ leading to indices $q$ and $\frac{q(\rho-1)}{q-1}$ (equal to $+\infty$ and $\rho-1$ when $q$ is respectively equal to $1$ and $+\infty$)
and define $$\underline{C}_{(\rho,q),d}:=\sup_{\stackrel{\mu,\nu\in{\cal P}_{q\vee\frac{(\rho-1)q}{q-1}}(\R^d)}{\mu\le_{cx}\nu,\mu\neq \nu}}\frac{\underline{\mathcal M}_\rho^\rho(\mu,\nu)}{\mathcal W_{q}(\mu,\nu)\sigma_{\frac{q(\rho-1)}{q-1}}^{\rho-1}(\nu)}\mbox{ and }\overline{C}_{(\rho,q),d}:=\sup_{\stackrel{\mu,\nu\in{\cal P}_{q\vee\frac{(\rho-1)q}{q-1}}(\R^d)}{\mu\le_{cx}\nu,\mu\neq \nu}}\frac{\overline{\mathcal M}_\rho^\rho(\mu,\nu)}{\mathcal W_{q}(\mu,\nu)\sigma_{\frac{q(\rho-1)}{q-1}}^{\rho-1}(\nu)},$$
with $\mathcal W_\infty(\mu,\nu)=\inf_{\pi\in\Pi(\mu,\nu)}\pi-\esssup_{(x,y)\in\R^d\times\R^d}\vert x-y\vert$.
Since $\underline{\mathcal M}_\rho\le \overline{\mathcal M}_\rho$, one has $\underline{C}_{(\rho,q),d}\le \overline{C}_{(\rho,q),d}$. These constants of course depend on the norm $|\cdot|$ on $\R^d$ (even if we do not make this dependence explicit) but, by equivalence of the norms, their finiteness does not. Since the Euclidean norm plays a particular role, we will denote it by $\|\cdot\|$ rather than $|\cdot|$.
\begin{theorem}\label{thmconst}
  \begin{description}
  \item[$(i)$]Let $\rho\in[1,2)$. For $q\in[1,\frac{1}{2-\rho}]$ (and even $q\in[1,+\infty]$ when $\rho=1$), one has $\underline{C}_{(\rho,q),1}\le K_\rho<+\infty$ where the constant $K_\rho$ is studied in \cite[Proposition 1]{JoMa22} while, for $q\in[1,+\infty]$, $\overline{C}_{(\rho,q),1}=+\infty$ and $\underline{C}_{(\rho,q),d}=+\infty$ for $d\ge 2$.
\item[$(ii)$] Let $\rho\in [2,+\infty)$ and $q\in[1,+\infty]$. One has $\overline{C}_{(\rho,q),d}<+\infty$ whatever $d$. Moreover, when $\R^d$ (resp. each $\R^d$) is endowed with the Euclidean norm, $\overline{C}_{(2,q),d}=2$ and $\sup_{d\ge 1}\overline{C}_{(\rho,q),d}<+\infty$.
  
  \end{description} 
\end{theorem}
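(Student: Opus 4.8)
The plan is to establish the positive statement \textbf{(ii)} by a uniform‑convexity argument that reduces $\overline{\mathcal M}^\rho_\rho$ to a quantity depending only on the marginals, and to refute the various inequalities of \textbf{(i)} by explicit families of marginals; the latter is where the real work lies.

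For \textbf{(ii)} I would fix $\rho\ge2$ and work first with the Euclidean norm $\|\cdot\|$. The key input is the dimension‑free uniform convexity of $z\mapsto\|z\|^\rho$: there is $c_\rho\in(0,1]$, with $c_2=1$, such that $\|b\|^\rho\ge\|a\|^\rho+\rho\|a\|^{\rho-2}\langle a,b-a\rangle+c_\rho\|b-a\|^\rho$ for all $a,b\in\R^d$ (a statement about two vectors, hence with $c_\rho$ independent of $d$). Applying this with $a=x-c$, $b=y-c$ for a fixed centre $c$, integrating against a martingale coupling $M(dx,dy)=\mu(dx)m(x,dy)$ and using $\int_{\R^d}(y-x)\,m(x,dy)=0$ to cancel the linear term gives, for every such $M$ and every $c$,
\[\int_{\R^{2d}}\|y-x\|^\rho\,M(dx,dy)\le\frac1{c_\rho}\Bigl(\int_{\R^d}\|y-c\|^\rho\,\nu(dy)-\int_{\R^d}\|x-c\|^\rho\,\mu(dx)\Bigr),\]
the right‑hand side being marginal‑dependent only (and nonnegative for every $c$ since $z\mapsto\|z-c\|^\rho$ is convex). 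It then remains to bound this difference of $\rho$‑th moments: choosing a $\mathcal W_q$‑(almost‑)optimal coupling $\pi$ and a centre $c$ realising $\sigma_{q(\rho-1)/(q-1)}(\nu)$ (with the conventions at $q\in\{1,+\infty\}$), writing the difference as $\int(\|y-c\|^\rho-\|x-c\|^\rho)\,\pi(dx,dy)$, using $\|y-c\|^\rho-\|x-c\|^\rho\le\rho\|y-c\|^{\rho-1}\|y-x\|$ (convexity), and applying Hölder with exponents $q$ and $q/(q-1)$ gives
\[\int_{\R^d}\|y-c\|^\rho\,\nu(dy)-\int_{\R^d}\|x-c\|^\rho\,\mu(dx)\le\rho\,\mathcal W_q(\mu,\nu)\,\sigma^{\rho-1}_{\frac{q(\rho-1)}{q-1}}(\nu),\]
since $\tfrac{(\rho-1)q}{q-1}\cdot\tfrac{q-1}{q}=\rho-1$. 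Hence $\overline C_{(\rho,q),d}\le\rho/c_\rho$ uniformly in $d$, and the equivalence of norms preserves finiteness for a general norm. When $\rho=2$ the two displays hold with $c_2=1$ and constant $\rho=2$, so $\overline C_{(2,q),d}\le2$, and the matching lower bound comes from $\mu_t=\tfrac12(\delta_{-e}+\delta_e)$, $\nu_t=\tfrac12(\delta_{-(1+t)e}+\delta_{(1+t)e})$ for a unit vector $e$: the forced martingale coupling has cost $(1+t)^2-1$, $\mathcal W_q(\mu_t,\nu_t)=t$, $\sigma_{q/(q-1)}(\nu_t)=1+t$, so the ratio tends to $2$ as $t\downarrow0$; thus $\overline C_{(2,q),d}=2$ and $\sup_{d\ge1}\overline C_{(\rho,q),d}<\infty$.

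For the positive part of \textbf{(i)}, the bound $\underline C_{(\rho,q),1}\le K_\rho$ on $q\in[1,\tfrac1{2-\rho}]$ (all $q$ when $\rho=1$) I would deduce from \eqref{inmartwass} and \cite[Proposition 1]{JoMa22}: starting from a coupling optimal for $\underline{\mathcal M}_\rho$, Hölder's inequality $\int\|y-x\|^\rho M\le(\int\|y-x\|^q M)^{1/q}(\int\|y-x\|^{\frac{(\rho-1)q}{q-1}}M)^{\frac{q-1}{q}}$ trades the exponent $\rho$ on the Wasserstein factor for $q$ and the exponent on the central moment from $\rho$ to $\tfrac{q(\rho-1)}{q-1}$, the second factor being controlled by $\sigma_{\frac{q(\rho-1)}{q-1}}(\nu)$ exactly when that index is $\ge1$, i.e. $q\le\tfrac1{2-\rho}$; for $\rho=1$ it is trivial since $\mathcal W_1\le\mathcal W_q$ and $\sigma^{0}=1$.

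The negative statements in \textbf{(i)} are the main obstacle. The mechanism I would exploit is that for $\rho<2$ the function $z\mapsto\|z\|^\rho$ is \emph{not} uniformly convex, so a martingale coupling can be forced to move an order‑one proportion of the mass a distance of order $\sqrt\varepsilon$ while every transport plan only needs displacements of order $\varepsilon$; since $\varepsilon^{\rho/2}\gg\varepsilon$, this breaks any bound $\overline{\mathcal M}^\rho_\rho\le C\mathcal W_q\sigma^{\rho-1}$. In dimension $d\ge2$ this is transparent with $\mu$ the uniform law on the unit sphere and $\nu$ the uniform law on the sphere of radius $1+\varepsilon$: the martingale step $x\mapsto x+V$ with $V$ uniform on the sphere of radius $\sqrt{(1+\varepsilon)^2-1}$ inside $x^\perp$ has constant increment of that order, whereas $\mathcal W_q(\mu,\nu)=\varepsilon$ and $\sigma(\nu)=1+\varepsilon$, so the ratio in $\overline C_{(\rho,q),d}$ is at least $\asymp\varepsilon^{\rho/2-1}\to\infty$; a one‑dimensional analogue (a "shuffle" martingale coupling on a fine grid, $k/N\mapsto(k\pm1)/N$ with equal weights, whose pushforward is $\mathcal W_q$‑close to the starting measure at rate $N^{-1-1/q}$ while the coupling costs $\asymp N^{-\rho}$) handles $\overline C_{(\rho,q),1}$. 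I expect the genuinely delicate points to be: (a) covering the full range $q\in[1,+\infty]$ in dimension one, which requires choosing the displacement scale and the tail/mass profile of $\nu$ jointly with $q$; and (b) for $\underline C_{(\rho,q),d}$ with $d\ge2$, designing the geometry of $\nu$ so that \emph{every} martingale coupling — not merely one extremal coupling — is expensive, which is where the threshold $\tfrac{1+\sqrt5}2$ of \cite{JoMa22} has to be pushed down to $2$ and which I anticipate being the hardest step.
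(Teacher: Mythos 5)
Your treatment of part $(ii)$ is correct and in fact slightly cleaner than the paper's. The uniform‑convexity inequality you invoke, $\|b\|^\rho\ge\|a\|^\rho+\rho\|a\|^{\rho-2}\langle a,b-a\rangle+c_\rho\|b-a\|^\rho$, is algebraically identical to the paper's Lemma~\ref{c1c2lemma}, inequality \eqref{c1lemma} (with $\kappa_\rho=1/c_\rho$), and both you and the paper observe that it is a statement about two vectors and hence dimension‑free, so that $\sup_{d\ge1}\overline C_{(\rho,q),d}<\infty$ for the Euclidean norm. Where you differ is the second step: rather than the symmetric estimate \eqref{c2lemma} followed by H\"older, the triangle inequality and $\mu\le_{cx}\nu$, you use the one‑sided subgradient inequality $\|y-c\|^\rho-\|x-c\|^\rho\le\rho\|y-c\|^{\rho-1}\|y-x\|$ and a single H\"older, which removes two steps and gives the explicit bound $\rho/c_\rho$ directly. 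Your two‑atom example $\mu_t=\tfrac12(\delta_{-e}+\delta_e)$, $\nu_t=\tfrac12(\delta_{-(1+t)e}+\delta_{(1+t)e})$ is also a simpler witness for $\overline C_{(2,q),d}\ge2$ than the paper's $n\to\infty$ limit of $\mu_{n,n^{-\alpha}},\nu_{n,n^{-\alpha}}$; the computation you give (cost $(1+t)^2-1$, $\mathcal W_q=t$, $\sigma=1+t$) is correct.

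Part $(i)$ has genuine gaps, and you flag most of them yourself. For the positive claim $\underline C_{(\rho,q),1}\le K_\rho$: the H\"older step you write, $\int\|y-x\|^\rho\,M\le(\int\|y-x\|^q\,M)^{1/q}(\int\|y-x\|^{\frac{(\rho-1)q}{q-1}}\,M)^{\frac{q-1}{q}}$ with $M$ a martingale coupling, cannot produce $\mathcal W_q(\mu,\nu)$ in the first factor, because $\bigl(\int\|y-x\|^q\,M\bigr)^{1/q}\ge\mathcal W_q(\mu,\nu)$ rather than $\le$. The paper instead applies H\"older to the quantile (comonotone) coupling — which is simultaneously optimal for every $\mathcal W_r$ in dimension one — so the first factor genuinely is $\mathcal W_q(\mu,\nu)$; the rest of the argument then plugs into the proof of \cite[Proposition 1]{JoMa22}. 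For the negative claims, which are the real content of the theorem, your sphere example and the "shuffle" at scale $1/N$ both only produce lower bounds on $\overline{\mathcal M}^\rho_\rho$, and, as you yourself compute implicitly, the single‑scale shuffle only blows up for small $q$: the ratio is scale‑invariant, so you need a second, smaller length scale. The paper's resolution is precisely a two‑scale version of the Br\"uckerhoff–Juillet family, $\mu_{n,z},\nu_{n,z}$ with boundary displacement $z=n^{-\alpha}$ chosen as a function of $q$ and $\rho$; this covers all $q\in[1,+\infty]$. And for $\underline C_{(\rho,q),d}=+\infty$ with $d\ge2$, the missing ingredient you correctly identify as "the hardest step" is supplied by the rotation trick of \cite{BruJu22}: tilting the displacements by an angle $\theta$ makes the martingale coupling \emph{unique}, so that $\underline{\mathcal M}^\rho_\rho=\overline{\mathcal M}^\rho_\rho$ for that pair, and letting $\theta\to0$ recovers the one‑dimensional asymptotics. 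Without these two ideas the negative half of part $(i)$ — and in particular the improvement of the threshold from $\tfrac{1+\sqrt5}2$ to $2$ — is not reached.
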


\begin{remark}
   \begin{itemize}
   \item The fact that $\rho=2$ appears as a threshold is related to the equality $\int_{\R^d\times \R^d}\|y-x\|^2M(dx,dy)=\int_{\R^d}\|y\|^2\nu(dy)-\int_{\R^d}\|x\|^2\mu(dx)$ for $M\in\Pi^M(\mu,\nu)$ when $\mu,\nu\in{\cal P}_2(\R^d)$ are such that $\mu\le_{cx}\nu$, which implies that when $\R^d$ is endowed with the Euclidean norm $$\underline{\mathcal M}^2_2(\mu,\nu)=\overline{\mathcal M}^2_2(\mu,\nu)=\int_{\R^d}\|y\|^2\nu(dy)-\int_{\R^d}\|x\|^2\mu(dx).$$
     \item For $\rho\in[1,2)$, one has $\overline{C}_{(\rho,q),d}=+\infty$ while $\sup_{\stackrel{\mu,\nu\in{\cal P}_{q\vee\frac{q}{q-1}}(\R^d)}{\mu\le_{cx}\nu,\mu\neq \nu}}\frac{\overline{\mathcal M}_\rho^2(\mu,\nu)}{\mathcal W_{q}(\mu,\nu)\sigma_{\frac{q}{q-1}}(\nu)}\le \overline{C}_{(2,q),d}<+\infty$ since $\overline{\mathcal M}_\rho\le\overline{\mathcal M}_2$.
   \end{itemize}
\end{remark}

\section{Proof}
The proof of Theorem \ref{thmconst} $(ii)$ relies on the next lemma, the proof of the lemma is postponed after the proof of the theorem. In what follows, to avoid making distinctions in case $q\in\{1,+\infty\}$, we use the convention that for any probability measure $\gamma$ and any measurable function $f$ on the same probability space $\left(\int |f(z)|^q\gamma(dz)\right)^{1/q}$ (resp. $\left(\left(\int |f(z)|^{\frac{q}{q-1}}\gamma(dz)\right)^{(q-1)/q},\left(\int |f(z)|^{\frac{q(\rho-1)}{q-1}}\gamma(dz)\right)^{(q-1)/q}\right)$) is equal to $\gamma-\esssup_{z}|f(z)|$ (resp. $(\gamma-\esssup_{z}|f(z)|,\gamma-\esssup_{z}|f(z)|^{\rho-1})$) when $q=+\infty$ (resp. $q=1$).
\begin{lemma}\label{c1c2lemma}
Given $\rho\in[2,+\infty)$, there exist constants $\kappa_{\rho}, \tilde \kappa_{\rho} \in[0,+\infty)$ such that for all $d\ge 1$ and $x,y\in\R^d$,
\begin{equation}\label{c1lemma}
{\lVert x-y \rVert}^{\rho} \leq \kappa_\rho \left((\rho-1){\lVert x \rVert}^{\rho} + {\lVert y \rVert}^{\rho} -\rho\lVert x\rVert^{\rho-2}\langle x, y \rangle\right),
\end{equation}

\begin{equation}\label{c2lemma}
{\lVert y \rVert}^{\rho} - {\lVert x \rVert}^{\rho} \leq {\tilde \kappa}_\rho {\lVert y-x \rVert} \left( {\lVert x \rVert}^{\rho-1} +{\lVert y \rVert}^{\rho-1} \right).
\end{equation}
\end{lemma}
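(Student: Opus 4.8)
Let me think about how to prove these two inequalities.

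For \eqref{c1lemma}: The key is that the right-hand side, as a function, should be related to convexity of $\|\cdot\|^\rho$. Note that $\nabla(\|x\|^\rho) = \rho\|x\|^{\rho-2}x$. So $(\rho-1)\|x\|^\rho + \|y\|^\rho - \rho\|x\|^{\rho-2}\langle x,y\rangle = \|y\|^\rho - \|x\|^\rho - \langle \nabla\|x\|^\rho, y-x\rangle$, which is the Bregman divergence of $\|\cdot\|^\rho$. By convexity this is $\geq 0$. We want it to dominate $\|x-y\|^\rho$ up to a constant.

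For \eqref{c2lemma}: This looks like a discrete mean value / convexity estimate. We have $\|y\|^\rho - \|x\|^\rho$. Write $g(t) = \|x + t(y-x)\|^\rho$; then $\|y\|^\rho - \|x\|^\rho = \int_0^1 g'(t)\,dt = \int_0^1 \rho\|x+t(y-x)\|^{\rho-2}\langle x+t(y-x), y-x\rangle\,dt$, and bound $|\langle\cdot,\cdot\rangle| \le \|x+t(y-x)\|\|y-x\|$, then $\|x+t(y-x)\| \le \max(\|x\|,\|y\|)$... wait, that's not true in general since the segment could bulge, but actually $\|x + t(y-x)\| \le (1-t)\|x\| + t\|y\| \le \max(\|x\|,\|y\|)$. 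So $\|y\|^\rho - \|x\|^\rho \le \rho \|y-x\| \max(\|x\|,\|y\|)^{\rho-1} \le \rho\|y-x\|(\|x\|^{\rho-1}+\|y\|^{\rho-1})$. So $\tilde\kappa_\rho = \rho$ works. Actually even simpler: just use $|a^\rho - b^\rho| \le \rho|a-b|\max(a,b)^{\rho-1}$ for nonnegative reals with $a = \|y\|$, $b = \|x\|$, plus $|\|y\| - \|x\|| \le \|y-x\|$. That's clean.

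For \eqref{c1lemma}, the hard part. Let me think more carefully. Define $F(x,y) = (\rho-1)\|x\|^\rho + \|y\|^\rho - \rho\|x\|^{\rho-2}\langle x,y\rangle$ (the Bregman divergence). We want $\|x-y\|^\rho \le \kappa_\rho F(x,y)$.

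By homogeneity: both sides are homogeneous of degree $\rho$ in $(x,y)$ jointly. So WLOG normalize, say $\|x\|^2 + \|y\|^2 = 1$ or $\max(\|x\|,\|y\|) = 1$. Then we need $\sup \frac{\|x-y\|^\rho}{F(x,y)} < \infty$. The ratio is scale-invariant and also invariant under simultaneous rotation, so by rotating we can assume $x, y$ lie in a 2-dimensional plane (or $x$ along a fixed axis). This reduces to a compact problem in low dimension, EXCEPT near the zero set of $F$. $F(x,y) = 0$ iff $y = x$ (strict convexity of $\|\cdot\|^\rho$ for $\rho > 1$ — need to be careful, $\|\cdot\|^\rho$ is strictly convex for $\rho\in(1,\infty)$ with Euclidean norm... actually for $\rho \ge 2$ it's even strongly convex? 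No—$\|\cdot\|^\rho$ is not strongly convex globally, but uniformly convex). And near $y = x$, we need $\|x-y\|^\rho \lesssim F(x,y)$; this requires understanding the local behavior: $F(x,y) \approx \frac12 \langle \nabla^2\|x\|^\rho (y-x), y-x\rangle$, and $\nabla^2\|x\|^\rho = \rho\|x\|^{\rho-2}(I + (\rho-2)\frac{xx^T}{\|x\|^2})$, which for $\rho \ge 2$ has smallest eigenvalue $\rho\|x\|^{\rho-2}$. So locally $F(x,y) \gtrsim \|x\|^{\rho-2}\|y-x\|^2$, but we want $\|y-x\|^\rho$ — since $\rho \ge 2$ and $\|y-x\| \le 2\max(\|x\|,\|y\|)$, we get $\|y-x\|^\rho = \|y-x\|^2 \cdot \|y-x\|^{\rho-2} \lesssim \|x\|^{\rho-2}\|y-x\|^2 \lesssim F$ as long as $\|x\|$ is comparable to $\max(\|x\|,\|y\|)$. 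The case $\|x\|$ small compared to $\|y\|$ needs separate handling: then $\|x-y\| \approx \|y\|$, and $F(x,y) = (\rho-1)\|x\|^\rho + \|y\|^\rho - \rho\|x\|^{\rho-2}\langle x,y\rangle \ge \|y\|^\rho - \rho\|x\|^{\rho-1}\|y\| \ge \|y\|^\rho(1 - \rho(\|x\|/\|y\|)^{\rho-1})$, bounded below by a constant times $\|y\|^\rho$ when $\|x\|/\|y\|$ is small enough; the intermediate regime ($\|x\|/\|y\|$ bounded away from $0$ and $\infty$, $y\neq x$) is compact after normalization and $F > 0$ there.

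So the plan is: (1) prove \eqref{c2lemma} directly via the integral/mean-value bound, getting $\tilde\kappa_\rho = \rho$; (2) for \eqref{c1lemma}, reduce to a 2-dimensional problem by rotation invariance and homogeneity, then split into three regimes based on $r := \|x\|/\|y\|$ (assuming $y \neq 0$, the case $y = 0$ being trivial since then $F = (\rho-1)\|x\|^\rho \ge \|x\|^\rho = \|x-y\|^\rho$ for $\rho \ge 2$): regime $r$ small where $F \gtrsim \|y\|^\rho$; regime $r$ large where $\|x - y\| \le \|x\| + \|y\| \lesssim \|x\|$ and $F \ge (\rho-1)\|x\|^\rho - \rho\|x\|^{\rho-1}\|y\| \gtrsim \|x\|^\rho$ after also noting we need $(\rho - 1) > $ something — recompute; and the compact middle regime where continuity and positivity of $F$ on $\{y\neq x\}$ give a uniform bound. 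The main obstacle is the local analysis near $x = y$ in the middle regime: I must show the ratio stays bounded as $y \to x$, which is exactly where the Hessian lower bound $\nabla^2\|\cdot\|^\rho \succeq \rho\|x\|^{\rho-2}I$ (valid since $\rho \ge 2$) and the inequality $\rho \ge 2$ combine to give $\|x-y\|^\rho \lesssim \|x\|^{\rho-2}\|x-y\|^2 \lesssim F(x,y)$. I expect this to require a careful Taylor expansion with quantitative control of the remainder, uniform over the (compact, after normalization) set of relevant $x$.
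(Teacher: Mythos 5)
Your proof of \eqref{c2lemma} is complete, correct, and genuinely more elementary than the paper's. The paper proves both inequalities by the same two-variable compactness argument; you instead observe that $|a^\rho - b^\rho| \le \rho |a-b| \max(a,b)^{\rho-1}$ for reals $a,b \ge 0$ (mean value theorem for $t\mapsto t^\rho$), set $a=\|y\|$, $b=\|x\|$, and use the reverse triangle inequality $|\|y\|-\|x\||\le\|y-x\|$ together with $\max(a,b)^{\rho-1}\le a^{\rho-1}+b^{\rho-1}$ to get \eqref{c2lemma} with the explicit constant $\tilde\kappa_\rho=\rho$. This is shorter, yields an explicit constant, and does not use the inner-product structure; the paper's argument only establishes finiteness of the constant while identifying its optimal value as the supremum of a two-variable ratio.

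For \eqref{c1lemma} your proposal is essentially the paper's strategy — exploit homogeneity and rotation invariance to reduce to two scalar variables, then argue by compactness after analyzing the two non-compact directions (the diagonal $y=x$ and $|(x,y)|\to\infty$). Your identification of the right-hand side as the Bregman divergence of $\|\cdot\|^\rho$ and the Hessian lower bound $\nabla^2\|\cdot\|^\rho\succeq\rho\|x\|^{\rho-2}I$ (valid for $\rho\ge 2$) are exactly the right ingredients near the diagonal. However, you do not carry the argument to completion: you yourself write that the middle regime near $y=x$ will ``require a careful Taylor expansion with quantitative control of the remainder, uniform over the compact set of relevant $x$.'' That uniformity is the gap. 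The paper sidesteps it neatly: instead of normalizing $\max(\|x\|,\|y\|)=1$ (which leaves a whole curve of diagonal points to control), it divides the inequality by $\|x\|^\rho$ and parametrizes $y/\|x\|=ze+\omega e^\perp$, so the diagonal collapses to the single point $(z,\omega)=(1,0)$. The ratio $((z-1)^2+\omega^2)^{\rho/2}/\varphi(z,\omega)$ is then a concrete continuous function on $\R^2\setminus\{(1,0)\}$ with limit $0$ at $(1,0)$ (a single one-variable Taylor expansion of $(1+t)^{\rho/2}$, using $\rho>2$) and limit $1$ at infinity, from which boundedness is immediate. As written, your proof of \eqref{c1lemma} is a plausible sketch but incomplete; you would need either to carry out your own uniform Taylor estimate or to adopt the paper's normalization by $\|x\|^\rho$, which makes the local analysis a genuinely finite-dimensional, single-point limit computation.
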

\begin{remark}\label{remrho2}
  When $\rho=2$, then \eqref{c1lemma} holds as an equality with $\kappa_\rho=1$ while, by the Cauchy-Schwarz and the triangle inequalities,
\begin{equation*}
    \lVert y \rVert^2 - \lVert x \rVert^2 \leq \langle y-x, y+x \rangle \leq \lVert y-x \rVert\times \lVert y+x\rVert 
    \leq \lVert y-x \rVert \left(\lVert x \rVert +\lVert y \rVert \right)
\end{equation*}
so that \eqref{c2lemma} holds with ${\tilde \kappa}_\rho = 1$.
\end{remark}

\begin{proof}[Proof of Theorem \ref{thmconst}]
  $(i) $ In dimension $d=1$, one has $\underline{\mathcal M}_1\le K_1 {\mathcal W}_1$ with $K_1=2$ according to
\cite[Proposition 1]{JoMa22}
and we deduce that $\underline{C}_{(1,q),1}\le K_1$ for $q\in[1,+\infty]$ since ${\mathcal W}_1\le {\mathcal W}_q$. Let now $\rho\in(1,2)$ and $q\in[1,\frac{1}{2-\rho}]$. One has $\frac{q(\rho-1)}{q-1}\ge 1$ since, when $q>1$, $\frac{q}{q-1}=1+\frac 1{q-1}\ge 1+\frac{2-\rho}{\rho-1}=\frac 1{\rho -1}$. For $\mu,\nu\in{\cal P}_{q\vee \frac{q(\rho-1)}{q-1}}(\R)$ with respective quantile functions $F_\mu^{-1}$ and $F_\nu^{-1}$, one has by optimality of the comonotonic coupling and H\"older's inequality
  \begin{align*}
   {\cal W}_\rho^\rho(\mu,\nu)&=\int_0^1|F_\nu^{-1}(u)-F_\mu^{-1}(u)|\times |F_\nu^{-1}(u)-F_\mu^{-1}(u)|^{\rho-1}du\\&\le \left(\int_0^1|F_\nu^{-1}(u)-F_\mu^{-1}(u)|^qdu\right)^{1/q}\left(\left(\int_0^1|F_\nu^{-1}(u)-F_\mu^{-1}(u)|^{\frac{q(\rho-1)}{q-1}}du\right)^{\frac{q-1}{q(\rho-1)}}\right)^{\rho-1}.
  \end{align*}
  Since, by the triangle inequality and $\mu\le_{cx}\nu$, one has for $c\in\R$
 \begin{align*}
   \left(\int_0^1|F_\nu^{-1}(u)-F_\mu^{-1}(u)|^{\frac{q(\rho-1)}{q-1}}du\right)^{\frac{q-1}{q(\rho-1)}}&\le \left(\int_0^1|F_\nu^{-1}(u)-c|^{\frac{q(\rho-1)}{q-1}}du\right)^{\frac{q-1}{q(\rho-1)}}+ \left(\int_0^1|F_\mu^{-1}(u)-c|^{\frac{q(\rho-1)}{q-1}}du\right)^{\frac{q-1}{q(\rho-1)}}\\
  &\le 2\left(\int_0^1|F_\nu^{-1}(u)-c|^{\frac{q(\rho-1)}{q-1}}du\right)^{\frac{q-1}{q(\rho-1)}},
 \end{align*}
 we deduce by minimizing over the constant $c$ that
\begin{align*}
   {\cal W}_\rho^\rho(\mu,\nu)\le {\cal W}_q(\mu,\nu)\times 2^{\rho-1}\sigma^{\rho-1}_{\frac{q(\rho-1)}{q-1}}(\nu).
\end{align*}
With this inequality replacing (30) in the proof of Proposition 1 \cite{JoMa22} and the general inequality
$$\int_0^1|F_\nu^{-1}(u)-F_\mu^{-1}(u)||F_\nu^{-1}(u)-c|^{\rho-1}du\le {\cal W}_q(\mu,\nu)\left(\int_0^1|F_\nu^{-1}(u)-c|^{\frac{q(\rho-1)}{q-1}}du\right)^{\frac{q-1}{q}},$$
replacing the special case $q=\rho$ in the second equation p840 in this proof, we deduce that ${\cal W}_\rho^\rho(\mu,\nu)\le K_\rho {\cal W}_q(\mu,\nu)\sigma^{\rho-1}_{\frac{q(\rho-1)}{q-1}}(\nu)$.

To check that $\overline{C}_{(\rho,q),1}=+\infty$ for $\rho\in[1,+\infty)$ and $q\in[1,+\infty]$, let us introduce for $n\ge 2$ and $z>0$, 
\begin{align*}
   &\mu_{n,z}=\frac{1}{2((n-1)z+1)}\left((1+z)\left(\delta_1+\delta_n\right)+2z\sum_{i=2}^{n-1}\delta_i\right)\\\mbox{ and }&\nu_{n,z}=\frac{1}{2((n-1)z+1)}\left(\delta_{1-z}+\delta_{n+z}+z\left(\delta_1+\delta_n\right)+2z\sum_{i=2}^{n-1}\delta_i\right).
\end{align*} 
This example generalizes the one introduced by Br\"uckerhoff and Juillet in \cite{BruJu22} which corresponds to the choice $z=1$. Since $$M_{n,z}=\frac{1}{2((n-1)z+1)}\left(\delta_{(1,1-z)}+z\delta_{(1,2)}+z\delta_{(n,n-1)}+\delta_{(n,n+z)}+z\sum_{i=2}^{n-1}\left(\delta_{(i,i-1)}+\delta_{(i,i+1)}\right)\right)$$
belongs to $\Pi^M(\mu_{n,z},\nu_{n,z})$, we have 
\begin{align*}
   \overline{\mathcal{M}}^\rho_\rho (\mu_{n,z}, \nu_{n,z})\ge \int_{\R\times\R}|y-x|^\rho M_{n,z}(dx,dy)=\frac{(n-1)z+z^\rho}{(n-1)z+1}.
\end{align*}
On the other hand, by optimality of the comonotonic coupling $\mathcal{W}^\rho_\rho (\mu_{n,z}, \nu_{n,z}) =\frac{z^\rho}{(n-1)z+1}$ for $\rho\in[1,+\infty)$ and $\mathcal{W}_\infty(\mu_{n,z}, \nu_{n,z})=z$. Last $\sigma_\infty(\nu_{n,z})=\frac{n-1+2z}{2}$ and, when $\rho\in[1,+\infty)$,
\begin{align*}
   \sigma_\rho^\rho(\nu_{n,z})=\frac{1}{2^\rho((n-1)z+1)}\left((n-1+2z)^\rho+z(n-1)^\rho+2z\sum_{i=2}^{\lfloor \frac{n+1}2\rfloor}(n+1-2i)^\rho\right).
\end{align*}

Let $\alpha\in [0,1)$. The sequence $n^{1-\alpha}$ goes to $\infty$ with $n$ and for $\rho\in[1,+\infty)$ and $q\in[1,+\infty]$, we have
\begin{align*}
   \int_{\R\times\R}|y-x|^\rho M_{n,n^{-\alpha}}(dx,dy)\rightarrow 1
  ,\;\mathcal{W}_q(\mu_{n,n^{-\alpha}}, \nu_{n,n^{-\alpha}})\sim n^{\alpha\frac{(1-q)}{q}-\frac 1 q}
\end{align*}
and $\sigma_{\frac{q(\rho-1)}{q-1}}^{\rho-1}(\nu_{n,n^{-\alpha}})\sim \frac{n^{\rho-1}}{2^{\rho-1}\left(1+\frac{q(\rho-1)}{q-1}\right)^{\frac{q-1}{q}}}$ where $\left(1+\frac{q(\rho-1)}{q-1}\right)^{\frac{q-1}{q}}=1$ by convention  when $q=1$ 
so that
\begin{align*}
   \frac{\int_{\R\times\R}|y-x|^\rho M_{n,n^{-\alpha}}(dx,dy)}{\mathcal{W}_q(\mu_{n,n^{-\alpha}}, \nu_{n,n^{-\alpha}})\sigma_{\frac{q(\rho-1)}{q-1}}^{\rho-1}(\nu_{n,n^{-\alpha}})}\sim 2^{\rho-1}\left(1+\frac{q(\rho-1)}{q-1}\right)^{\frac{q-1}{q}}n^{\frac{q-1}{q}\alpha+\frac 1{q}+1-\rho}.
\end{align*}
Let $\rho\in[1,2)$. For $q=1$, the exponent of $n$ in the equivalent of the ratio is equal to $2-\rho>0$ so that the right-hand side goes to $+\infty$ with $n$. For $q\in (1,+\infty]$, we may choose $\alpha\in\left(\frac{q(\rho-1)-1}{q-1},1\right)$ (with left boundary equal to $\rho-1$ when $q=+\infty$) so that $\frac{q-1}{q}\alpha+\frac 1{q}+1-\rho>0$ and the right-hand side still goes to $+\infty$ with $n$. Therefore $\overline{C}_{(\rho,q),1}=+\infty$.
To prove that $\underline{C}_{(\rho,q),d}=+\infty$ for $d\ge 2$ it is enough by \cite[Lemma 1]{JoMa22} to deal with the case $d=2$, in which we use the rotation argument in \cite{BruJu22}. For $n\ge 2$ and $\theta\in(0,\pi)$, $M^\theta_n$ defined as $\frac{1}{2((n-1)n^{-\alpha}+1)}$ times
\begin{align*}
  &\delta_{((1,0),(1-n^{-\alpha}\cos\theta,-n^{-\alpha}\sin\theta))}+n^{-\alpha}\delta_{((1,0),(1+\cos\theta,\sin\theta))}+n^{-\alpha}\delta_{((n,0),(n-\cos\theta,-\sin\theta))}\\&+\delta_{((n,0),(n+n^{-\alpha}\cos\theta,n^{-\alpha}\sin\theta))}+n^{-\alpha}\sum_{i=2}^{n-1}\left(\delta_{((i,0),(i-\cos\theta,-\sin\theta))}+\delta_{((i,0),(i+\cos\theta,\sin\theta))}\right)
\end{align*}
which is a martingale coupling between the image $\mu_n$ of $\mu_{n,n^{-\alpha}}$ by $\R\ni x\mapsto (x,0)\in\R^2$ and its second marginal $\nu_n^\theta$ which, as $\theta\to 0$, converges  in any ${\cal W}_q$ with $q\in[1,+\infty]$  to the image of $\nu_{n,n^{-\alpha}}$ by the same mapping. According to the proof of \cite[Lemma 1.1]{BruJu22}, $\Pi^M(\mu_n,\nu_n^\theta)=\{M^\theta_n\}$ so that $\underline{\mathcal{M}}^\rho_\rho (\mu_{n}, \nu_{n}^\theta)=\int_{\R^2\times\R^2}|y-x|^\rho M_{n}^\theta(dx,dy)$ and 
$$\lim_{\theta\to 0}\frac{\underline{\mathcal{M}}^\rho_\rho (\mu_{n}, \nu_{n}^\theta)}{\mathcal{W}_q(\mu_{n}, \nu_{n}^\theta)\sigma_{\frac{q(\rho-1)}{q-1}}^{\rho-1}(\nu_{n}^\theta)}=\frac{\int_{\R\times\R}|y-x|^\rho M_{n,n^{-\alpha}}(dx,dy)}{\mathcal{W}_q(\mu_{n,n^{-\alpha}}, \nu_{n,n^{-\alpha}})\sigma_{\frac{q(\rho-1)}{q-1}}^{\rho-1}(\nu_{n,n^{-\alpha}})}.$$
With the above analysis of the asymptotic behaviour of the right-hand side as $n\to\infty$, we conclude that $\underline{C}_{(\rho,q),d}=+\infty$.

$(ii)$
Now, let $\rho\in[2,+\infty)$ and  $M\in\Pi^M(\mu,\nu)$.
Applying Equation \eqref{c1lemma} in Lemma \ref{c1c2lemma} for the inequality and then using the martingale property of $M$, we obtain that for $c \in \R^d$, we have
\begin{align}
\int_{\R^d\times\R^d}&\lVert x-y\rVert^\rho \,M(dx,dy) = \int_{\R^d\times\R^d}\lVert (x-c) - (y-c)\rVert^\rho \,M(dx,dy)\notag\\
&\leq \kappa_\rho \int_{\R^d\times\R^d}\left((\rho-1){\lVert x-c \rVert}^{\rho} + {\lVert y-c \rVert}^{\rho} -\rho\lVert x-c \rVert^{\rho-2}\langle x-c, y-c \rangle \right)\,M(dx,dy)\notag\\ 
&= \kappa_\rho \left(\int_{\R^d}\lVert y-c \rVert^\rho \nu(dy) - \int_{\R^d}\lVert x-c \rVert^\rho \mu(dx))\right).\label{maj1}
\end{align}
Denoting by $\pi\in \Pi(\mu,\nu)$ an optimal coupling for $\mathcal{W}_q(\mu,\nu)$, we have using Equation \eqref{c2lemma} in Lemma \ref{c1c2lemma} for the inequality
\begin{align}
   \int_{\R^d}\lVert y-c \rVert^\rho \nu(dy) &- \int_{\R^d}\lVert x-c \rVert^\rho \mu(dx)=\int_{\R^d\times\R^d}\left(\lVert y-c \rVert^\rho-\lVert x-c \rVert^\rho\right)\pi(dx,dy)\notag\\&\le {\tilde \kappa}_\rho \int_{\R^d\times\R^d} {\lVert y-x \rVert} \left( {\lVert x-c \rVert}^{\rho-1} +{\lVert y-c \rVert}^{\rho-1} \right) \, \pi(dx,dy).\label{maj2}
\end{align}
By the fact that all norms are equivalent in finite dimensional vector spaces, there exists $\lambda \in [1, \infty)$ such that for all $z \in \R^d$, we have
\[\frac{\lVert z \rVert}{\lambda} \leq \vert z\vert \leq \lambda \lVert z \rVert.\] Therefore, using \eqref{maj1} and \eqref{maj2} for the second inequality, H\"older's inequality for the fourth, the triangle inequality for the fifth and $\mu\le_{cx}\nu$ for the sixth, we get that for $c\in \R^d$,
\begin{align*}
    \int_{\R^d\times\R^d}&\vert x-y\vert^\rho \,M(dx,dy)
    \leq \lambda^\rho \int_{\R^d\times\R^d}\lVert x-y\rVert^\rho \,M(dx,dy) \\
    &\leq \kappa_\rho {\tilde \kappa}_\rho \lambda^\rho \int_{\R^d\times\R^d} {\lVert x-y \rVert} \left( {\lVert x-c \rVert}^{\rho-1} +{\lVert y-c \rVert}^{\rho-1} \right) \, \pi(dx,dy)\\
    &\le \kappa_\rho {\tilde \kappa}_\rho \lambda^{2\rho} \int_{\R^d\times\R^d} \vert x-y\vert \left( {\vert x-c \vert}^{\rho-1} +{\vert y-c \vert}^{\rho-1} \right)\, \pi(dx,dy)\\
    &\le \kappa_\rho {\tilde \kappa}_\rho \lambda^{2\rho} \mathcal W_{q}(\mu,\nu) \left(\int_{\R^d\times\R^d} \left( {\vert x-c \vert}^{\rho-1} +{\vert y-c \vert}^{\rho-1} \right)^{\frac{q}{q-1}}\, \pi(dx,dy)\right)^{\frac{q-1}{q}} \\&\le \kappa_\rho {\tilde \kappa}_\rho \lambda^{2\rho} \mathcal W_{q}(\mu,\nu) \left(\left(\int_{\R^d} \vert x-c \vert^{\frac{q(\rho-1)}{q-1}}\mu(dx)\right)^{(q-1)/q} +\left(\int_{\R^d} \vert y-c \vert^{\frac{q(\rho-1)}{q-1}}\nu(dy)\right)^{(q-1)/q}\right)\\&\le 2\kappa_\rho {\tilde \kappa}_\rho \lambda^{2\rho} \mathcal W_{q}(\mu,\nu)\left( \int_{\R^d}{\vert y -c\vert}^{\frac{q(\rho-1)}{q-1}}\nu(dy)\right)^{\frac{q-1}{q}}.
\end{align*}
By taking the infimum with respect to $c\in\R^d$, we conclude that the statement holds with $\overline{C}_{(\rho,q),d}\le 2\kappa_\rho{\tilde \kappa}_\rho\lambda^{2\rho}$. Finally, let us suppose that $\R^d$ is endowed with the Euclidean norm. Then we can choose $\lambda=1$, so that  $\overline{C}_{(\rho,q),d}\le 2\kappa_\rho{\tilde \kappa}_\rho$ with the right-hand side not depending on $d$ according to Lemma \ref{c1c2lemma}. Moreover, by Remark \ref{remrho2}, $\overline{C}_{(2,q),d}\le 2$ and since for $\alpha\in[0,1)$, \begin{equation*}
\lim_{n \to \infty}\frac{\overline{\mathcal{M}}^2_2 (\mu_{n,n^{-\alpha}}, \nu_{n,n^{-\alpha}})}{\sqrt{\mathcal{W}_1 (\mu_{n,n^{-\alpha}}, \nu_{n,n^{-\alpha}})\sigma_\infty( \nu_{n,n^{-\alpha}})}} = 2,
\end{equation*}
we have $\overline{C}_{(2,q),d}=2$.
\end{proof}

\begin{proof} [Proof of Lemma \ref{c1c2lemma}]
$ $
We suppose that $\rho>2$ since the case $\rho=2$ has been addressed in Remark \ref{remrho2}.

Suppose $x \neq 0$ and $y\neq x$ and set $e = \frac{x}{\lVert x \rVert}$ and $z=
\frac{\langle y, x\rangle}{\|x\|^2}$. The vector $\frac{y}{\lVert x \rVert}-ze$ is orthogonal to $e$ and can be rewritten as $\omega e^\perp$ with $\omega \geq 0$ and $e^\perp\in\R^d$ such that $ \lVert e^\perp \rVert=1$ and $\langle e, e^\perp\rangle = 0$. One then has $\frac{y}{\lVert x \rVert} = ze + \omega e^\perp$ and since $y\ne x$, $(z,w)\ne (1,0)$.

The first inequality \eqref{c1lemma} divided by $\lVert x\rVert^\rho$ writes:
\begin{align*}
\left((1 - z)^2 + \omega^2 \right) ^\frac{\rho}{2} &\leq \kappa_\rho \left((\rho - 1) + \left(z^2 + \omega^2 \right)^\frac{\rho}{2} -\rho z \right).
\end{align*}

Let us define $\varphi(z, \omega)=\rho -1 +(z^2 + \omega^2)^{\frac{\rho}{2}}-\rho z= - \rho(z-1) - 1  + \left(1+ 2(z-1) + (z-1)^2  +\omega^2\right)^{\frac{\rho}{2}}$ as the second factor in the right-hand side. Applying a Taylor's expansion at $t=0$ to $t\mapsto (1+t)^{\frac\rho 2}$, we obtain
\begin{equation*}
\varphi(z, \omega) = \frac{\rho}{2}\omega^2+\frac{\rho}{2}(\rho - 1)(z-1)^2 + o((z-1)^2 +\omega^2).
\end{equation*}

Since $\rho > 2$, we conclude that
\[
\lim_{(z, \omega)\to (1,0)}\frac{((1-z)^2 +\omega^2)^{\frac{\rho}{2}}}{\varphi(z, \omega)} = 0.
\]

As $\vert (z,\omega) \vert \to +\infty$, $\varphi(z,\omega) \sim (z^2 + \omega^2)^{\frac{\rho}{2}}\sim\left((z-1)^2 +\omega^2\right)^\rho$. Therefore,
\[
\lim_{\vert(z, \omega)\vert \to +\infty}\frac{((z-1)^2 +\omega^2)^{\frac{\rho}{2}}}{\varphi(z, \omega)} = 1.
\]

The function $(z,w)\mapsto\frac{((z-1)^2 +\omega^2)^{\frac{\rho}{2}}}{\varphi(z, \omega)}$ being continuous on $\R^2\setminus \{(1,0)\}$, we deduce that 

\[
1\le \sup_{(z, \omega)\neq (1,0)}\frac{((z-1)^2 +\omega^2)^{\frac{\rho}{2}}}{\varphi(z, \omega)} < +\infty.
\]
Since when $x=0$ or $y=x$, \eqref{c1lemma} holds with $\kappa_\rho$ replaced by $1$, we conclude that the optimal constant is $\kappa_\rho=\sup_{(z, \omega)\neq (1,0)}\frac{((z-1)^2 +\omega^2)^{\frac{\rho}{2}}}{\varphi(z, \omega)}$.

For the second inequality \eqref{c2lemma}, we can apply the same approach: divided by $\lVert x \rVert^\rho$, it writes

\begin{align*}
\left(z^2 + \omega^2 \right) ^\frac{\rho}{2} - 1 &\leq {\tilde \kappa}_\rho \left((z-1)^2 + \omega^2 \right) ^\frac{1}{2} \left((z^2 + \omega^2)^\frac{\rho-1}{2} + 1\right).
\end{align*}

As $(z,\omega) \to (1,0)$, $\left(z^2 + \omega^2\right)^\frac{\rho}{2} -1 = \left(1+2(z-1) +(z-1)^2 + \omega^2\right)^\frac{\rho}{2} - 1 \sim \frac{\rho}{2}\left(2(z-1) + \omega^2 \right)$

\[
\limsup_{(z, \omega) \to (1,0)}\frac{\left(z^2 + \omega^2 \right) ^\frac{\rho}{2} - 1}{\left((z-1)^2 + \omega^2 \right) ^\frac{1}{2} \left(1 +(z^2 + \omega^2)^\frac{\rho-1}{2}\right)}= \limsup_{z\to 1}\frac{\rho(z-1)}{2|z-1|} = \frac{\rho}{2}.
\]

On the other hand,
\[
\lim_{\vert(z, \omega)\vert \to +\infty}\frac{\left(z^2 + \omega^2 \right) ^\frac{\rho}{2} - 1}{\left((z-1)^2 + \omega^2 \right) ^\frac{1}{2} \left(1 +(z^2 + \omega^2)^\frac{\rho-1}{2}\right)} =1.
\]

By continuity of the considered function over $\R^2\setminus\{(1,0)\}$, we deduce that 

\[
\frac\rho 2\vee 1\le \sup_{(z, \omega)\neq (1,0)}\frac{\left(z^2 + \omega^2 \right) ^\frac{\rho}{2} - 1}{\left((z-1)^2 + \omega^2 \right) ^\frac{1}{2} \left(1 +(z^2 + \omega^2)^\frac{\rho-1}{2}\right)} < +\infty.
\]
Since when $x=0$ or $y=x$, \eqref{c2lemma} holds with ${\tilde \kappa}_\rho$ replaced by $1$, we conclude that the optimal constant is ${\tilde \kappa}_\rho=\sup_{(z, \omega)\neq (1,0)}\frac{\left(z^2 + \omega^2 \right) ^\frac{\rho}{2} - 1}{\left((z-1)^2 + \omega^2 \right) ^\frac{1}{2} \left(1 +(z^2 + \omega^2)^\frac{\rho-1}{2}\right)}$.
\end{proof}

\bibliographystyle{plain}
\bibliography{biblio.bib}

\end{document}